\documentclass[11pt]{amsart}
\usepackage{amsfonts}
\usepackage{amssymb}
\usepackage{hyperref}
\usepackage{graphics}
\usepackage{hyperref}
\usepackage{amsmath}
\hypersetup{
colorlinks=true,
linkcolor=red,
citecolor=cyan,
}

\newtheorem{thmx}{Theorem}[section]
\newtheorem{cor}[thmx]{Corollary}

\renewcommand{\thethmx}{\Alph{thmx}}
\newtheorem*{conj*}{Denjoy's Conjecture}

\theoremstyle{definition}

\newtheorem{exa}{Example}[section]
\numberwithin{equation}{section}

  \title[A problem of Heittokangas-Ishizaki-Tohge-Wen]{A problem of Heittokangas-Ishizaki-Tohge-Wen concerning a certain differential-difference equation }

    \author{Xuxu Xiang}
     \address{Xuxu Xiang\newline School of Mathematical Sciences, Guizhou Normal University, Guiyang, 550025, P.R. China. }
     \email{1245410002@qq.com}
  
     \author{Jianren Long*}
     \address{Jianren Long \newline School of Mathematical Sciences, Guizhou Normal University, Guiyang, 550025, P.R. China. }
     \email{longjianren2004@163.com}
     \thanks{}

   \keywords{Nevanlinna theory; entire solution; exponential polynomial; differential-difference equation\\
   *Corresponding author.}
     \subjclass{30D35, 39B32}

\begin{document}
\begin{abstract}
All the finite order  entire solutions  of 
\begin{equation*}
f^n(z)+q(z)e^{Q(z)}f^{(k)}(z+c)=P(z)  
\end{equation*}
are given, where $ q(z) $, $ Q(z), P(z) $ are polynomials, $ k $ and $ n \geq 2 $ are integers, and $ c \in \mathbb{C} \setminus \{0\} $.
This solves an open problem of Heittokangas-Ishizaki-Tohge-Wen [Bull. Lond. Math. Soc. 55, 1-77 (2023)] and complements the results of  Wen-Heittokangas-Laine [Acta Math. Sinica 28, 1295-1306 (2012)] and Liu [Mediterr. J. Math. 13, 3015–3027 (2016)]. 

\end{abstract}

\maketitle

%% The correct journal style for \specialsection is all uppercase; a known bug
%% in amsart.cls prevents this, so input must be uppercase until it is fixed.
%\specialsection*{This is a Special Section Head}
%\specialsection*{THIS IS A SPECIAL SECTION HEAD}
%This is an example of a special section head%
%%%%%%%%%%%%%%%%%%%%%%%%%%%%%%%%%%%%%%%%%%%%%%%%%%%%%%%%%%%%%%%%%%%%%%%%
%\footnote{Here is an example of a footnote. Notice that this footnote
%text is running on so that it can stand as an example of how a footnote
%with separate paragraphs should be written.
%\par
%And here is the beginning of the second paragraph.}%
%%%%%%%%%%%%%%%%%%%%%%%%%%%%%%%%%%%%%%%%%%%%%%%%%%%%%%%%%%%%%%%%%%%%%%%%
.

\section{Introduction and main results}\label{sec1}

~~~~
Let $f$ be a meromorphic function in the complex plane $\mathbb{C}$. Assume that the reader is familiar with the standard notation and basic results of Nevanlinna theory, such as the proximity function $m(r,f),$ the counting function $N(r,f)$,  the
characteristic function $~T(r,f)=m(r,f)+N(r,f)$, see \cite{hayman} for more details. A meromorphic
function $g$ is said to be a small function of $f$ if $T(r,g)=S(r,f)$, where $S(r,f)$  denotes any quantity
that satisfies $S(r,f)= o(T(r, f))$ as $r$ tends to infinity, outside of a possible exceptional set of finite linear measure. $\rho(f)$   denotes the order of $f$ and define $\lambda(f)$ as the exponent of convergence of the zeros sequences of $f$.  

%

 %In 1927,  Montel\cite{Montel} studied Fermat type functional equation\begin{align}\label{0.1.1}	f^n(z)+g^m(z)=1,\end{align}for the first time by drawing an analogy with Fermat's number-theoretic equations,where  $f,g$ are meromorphic functions, $m,n$ are positive integers. Montel proved  \eqref{0.1.1} has no nonconstant entire solutions under the conditions $m=n>2$. Later,  the existence of meromorphic solutions to equation \eqref{0.1.1} is discussed by Gross\cite{} and Baker\cite{B}.

In the last century, in order to study the zeros of meromorphic functions and their derivatives, Hayman \cite[p. 69]{hayman} provided a generalization of the Tumura-Clunie theorem \cite{clu,Tu}. 

\setcounter{thmx}{0}

\renewcommand{\thethmx}
{\arabic{section}.\arabic{thmx}}

\begin{thmx}\cite[Theorem 3.9]{hayman}
\label{thA}
 Let  meromorphic functions $f$ and $g$  satisfy the nonlinear differential equation
 \begin{align}
 \label{tc}
 f^n(z)+P_d(z,f)=g(z), 
 \end{align}
 where $P_d(z,f)$ is a differential polynomial in $f$ of degree $d\leq{n-1}$ with  small functions coefficients. If $N(r,f)+N(r,\frac{1}{g})=S(r,f)$, then  $g(z)=(f(z)+a(z))^n$, where
$a$ is a small meromorphic function of $f$.
\end{thmx}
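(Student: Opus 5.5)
The plan is the classical Clunie-lemma argument combined with the second main theorem in Milloux form. First I reduce to the case that $g$ is not a small function of $f$. From \eqref{tc} and the fact that $P_d$ has degree $d\le n-1$ with small coefficients, the Clunie lemma (applied to $f^{n-1}\cdot f = g-P_d$) and a direct count of poles give $nT(r,f)\le T(r,g)+(n-1)T(r,f)+S(r,f)$. Conversely $T(r,g)\le nT(r,f)+S(r,f)$. Hence $T(r,g)$ and $T(r,f)$ are comparable, so small functions of $f$ and of $g$ coincide. I will use this throughout.

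The key step is to differentiate the equation to eliminate $g$. Write $\psi=g'/g$; since $N(r,f)+N(r,1/g)=S(r,f)$, the logarithmic derivative lemma gives $T(r,\psi)=\overline N(r,g)+\overline N(r,1/g)+S(r,f)=S(r,f)$, so $\psi$ is small. Computing $g'-\psi g=0$ from \eqref{tc} yields
\[
f^{n-1}\bigl(nf'-\psi f\bigr)=\psi P_d-P_d' =:Q(z,f),
\]
where $Q$ is a differential polynomial of degree $\le n-1$ with small coefficients. Set $\varphi=nf'-\psi f$. If $\varphi\not\equiv0$, the Clunie lemma gives $m(r,\varphi)=S(r,f)$ and $m(r,f\varphi)=S(r,f)$. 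Poles of $\varphi$ come only from poles of $f$ and $\psi$, so $T(r,\varphi)=S(r,f)$, and hence $m(r,f)\le m(r,f\varphi)+m(r,1/\varphi)=S(r,f)$. Together with $N(r,f)=S(r,f)$ this gives $T(r,f)=S(r,f)$, which is absurd for nonconstant $f$ (and in the degenerate small case the statement is handled separately). Thus $\varphi\equiv0$, i.e.\ $nf'=\psi f$. This forces $f^n=Cg$, which is the case $a\equiv0$.

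That conclusion is too strong, and the gap is precisely that $P_d$ may contain terms of degree $n-1$ that cancel. The correct normalization, which I would do first, is to write $P_d$'s degree-$(n-1)$ part as $na f^{n-1}+\dots$ and substitute $f=F-a$, with $a$ chosen so that the new equation $F^n+\tilde P(z,F)=g$ has $\deg\tilde P\le n-2$. Choosing $a$ requires the top-degree part to be absorbable, i.e.\ to reduce to $f^{n-1}$ times a small function. Handling this is the main obstacle. I would first repeat the above argument with $\varphi=nF'-\psi F$ and the right side of degree $\le n-1$. Then $\varphi\equiv0$ gives $F^n=Cg$, and the remaining terms of $\tilde P$ must vanish identically. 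To confirm this, I would compare $T(r,\cdot)$, using the Milloux/Hayman inequality $T(r,F)\le\overline N(r,F)+N(r,1/F)+\overline N(r,1/(F^n-g))$ type bounds. This would be done via the second main theorem for the three small targets $0,\infty$ and the zeros of $g$, to rule out a nonzero remainder. The result would be $g=(f+a)^n$.
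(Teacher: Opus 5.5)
The paper does not prove this theorem; it quotes it from Hayman's book. So I am measuring your plan against the standard Clunie-type argument. Your framework is the right one: eliminate $g$ via $\psi=g'/g$, apply Clunie's lemma, then shift $f$. As written, however, it has a genuine gap in two places.

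First, in your second paragraph, Clunie's lemma applied to $f^{n-1}\varphi=Q(z,f)$ with $\deg Q\le n-1$ yields only $m(r,\varphi)=S(r,f)$. The claim $m(r,f\varphi)=S(r,f)$ would require reading the identity as $f^{n-2}(f\varphi)=Q$, and that needs $\deg Q\le n-2$. This is exactly where that argument breaks.

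Second, and more seriously, the normalization $f=F-a$ cannot make $\deg\tilde P\le n-2$ in general. Suppose the degree-$(n-1)$ part of $P_d$ contains terms such as $f^{n-2}f'$ or $(f')^{n-1}$. The shift leaves their top-degree parts $F^{n-2}F'$ and $(F')^{n-1}$ in $\tilde P$, because a shift only changes the coefficient of the pure monomial $F^{n-1}$ (through $-naF^{n-1}$). So no small $a$ works. You flag this as the main obstacle but do not resolve it. The closing appeal to a Milloux-type second main theorem with ``the zeros of $g$'' as a target is not an argument: the zeros of $g$ are not a small target function, and nothing there removes derivative terms.

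The missing idea is to exploit the part of your computation that is valid. You have $m(r,\varphi)=S(r,f)$ and $N(r,\varphi)\le 3N(r,f)+N(r,\psi)=S(r,f)$. Hence $\varphi=nf'-\psi f$ is a small function, whether or not it vanishes.

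1. Write $f'=\frac1n(\psi f+\varphi)$. By induction, $f^{(j)}=A_jf+B_j$ with small $A_j,B_j$.
2. Substituting into $P_d$ turns the equation into $g=f^n+\sum_{j=0}^{n-1}b_jf^j$, an ordinary polynomial in $f$ with small coefficients.
3. Only now does your shift work. With $a=b_{n-1}/n$ and $F=f+a$ you get $g=F^n+\tilde R(F)$ with $\deg\tilde R\le n-2$.
4. Set $\tilde\varphi=nF'-\psi F$. It satisfies $F^{n-1}\tilde\varphi=\psi\tilde R(F)-(\tilde R(F))'$, whose right-hand side has degree $\le n-2$. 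So Clunie's lemma legitimately gives both $m(r,\tilde\varphi)=S(r,f)$ and $m(r,F\tilde\varphi)=S(r,f)$.
5. Exactly as in your own argument, this forces $\tilde\varphi\equiv0$, hence $F^n=Cg$.
6. Applying Clunie once more to $F^{n-1}\cdot(C^{-1}-1)F=\tilde R(F)$ rules out $C\neq1$. Therefore $g=(f+a)^n$.

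No second main theorem is needed.

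A minor point: in your first paragraph, Clunie's lemma does not apply to $f^{n-1}\cdot f=g-P_d$, because $g$ is not a small coefficient. The bound $T(r,f)\le T(r,g)+S(r,f)$ instead follows from $m(r,P_d)\le (n-1)m(r,f)+S(r,f)$ together with $N(r,f)=S(r,f)$.
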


There are  many   extensions and applications on the Tumura-Clunie theorem; see e.g. \cite{Mes,Toda,Yi}.   In 2001, Yang \cite{yang2001} investigated transcendental entire functions \( f \) of finite order that satisfy the equation
\begin{equation*}
L(f) - p(z)f^n = h(z),
\end{equation*}
where \( L(f) \) represents a linear differential polynomial in \( f \) with polynomial coefficients, \( p(z) \) is a non-zero polynomial, \( h(z) \) is an entire function, and \( n > 3 \). Specifically, Yang demonstrated that such a function \( f \) must be unique, except in the case where \( L(f) \) is identically zero. Heittokangas-Korhonen-Laine \cite{H2002}  complemented  Yang's results when $h$, $p$, and the coefficients of \( L(f) \) are meromorphic. 
 
 With the establishment and development of the difference Nevanlinna theory (see \cite{chiang,halburd2006,halburd2014} for more details), the study of the difference version of \eqref{tc} has become a very interesting topic. 
Yang-Laine \cite{yang2010} proved that when \( n > 3 \), $L(z,f)$ is a linear differential-difference polynomial in $f$ with small meromorphic coefficients and \( h \) is of finite order, equation 
 \begin{equation*}
f^n-L(z,f) = h(z)
\end{equation*}
 has at most one finite order  transcendental entire  solution, and \( f \) has the same order as \( h \). In particular,  They shown   that the equation
\begin{equation}
\label{1.2}
f(z)^2 + q(z)f(z + 1) = p(z),
\end{equation}
where \( p(z), q(z) \) are polynomials, admits no transcendental entire solutions of finite order. 

However, equation \eqref{1.2} has   solutions when  $q$
 is a transcendental function.  For example \cite{Wen}, \( f_1(z) = e^z + 1 \) and \( f_2(z) = e^z - 1 \) both solve
$f(z)^2 - 2e^z f(z - \log 2) = 1$.  Wen-Heittokangas-Laine \cite[Theorem 1.1(a)]{Wen} found  that every entire and finite-order solution $f$ of
\begin{align}
\label{1.3}
f^n(z)+q(z)e^{Q(z)}f(z+c)=P(z) 
\end{align}
satisfies $\rho(f)=\deg (Q)$ and is of mean type, where $ q(z) $, $ Q(z), P(z) $ are polynomials,  $ n \geq 2 $ are integers, and $ c \in \mathbb{C} \setminus \{0\} $.
 Hence it seems plausible that such \(f\) could be an exponential polynomial of the form
\begin{equation}
\label{1.4}
f(z) = P_1(z)e^{Q_1(z)}+\cdots + P_l(z)e^{Q_l(z)},
\end{equation}
where \(P_j\)'s and \(Q_j\)'s are polynomials in \(z\).  
 Motivated by this observation, Wen-Heittokangas-Laine  \cite[Theorem 1.1]{Wen}  classified
the exponential polynomial solutions of \eqref{1.3}.

\begin{thmx}{\cite[Theorem 1.1]{Wen}}
\label{thB}
 Let \(n \geq 2\) be an integer, let \(c \in \mathbb{C} \setminus \{0\}\), and let \(q(z), Q(z), P(z)\) be polynomials such that \(Q(z)\) is not a constant and \(q(z) \not\equiv 0\). Then we identify the finite order entire solutions \(f\) of equation \eqref{1.3} as follows:
\begin{enumerate}
    \item Every solution \(f\) satisfies \(\rho(f)=\deg (Q)\) and is of mean type.
    \item Every solution \(f\) satisfies \(\lambda(f)=\rho(f)\) if and only if \(P(z)\not\equiv 0\).
    \item A solution \(f\) belongs to \(\Gamma_0\) if and only if \(P(z)\equiv 0\). In particular, this is the case if \(n \geq 3\).
    \item If a solution \(f\) belongs to \(\Gamma_0\) and if \(g\) is any other finite order entire solution to \eqref{1.3}, then \(f = \eta g\), where \(\eta^{n - 1}=1\).
    \item If \(f\) is an exponential polynomial solution of the form \eqref{1.4}, then \(f\in\Gamma_1\). Moreover, if \(f\in\Gamma_1\setminus\Gamma_0\), then \(\rho(f)=1\).
\end{enumerate}
Here $\Gamma_1 = \{h = \mathrm{e}^{\alpha(z)} + d: d\in\mathbb{C} \text{ and } \alpha \text{ polynomial}, \alpha\neq \text{const.}\},
\Gamma_0 = \{h = \mathrm{e}^{\alpha(z)}: \alpha \text{ polynomial}, \alpha\neq \text{const.}\}.$
\end{thmx}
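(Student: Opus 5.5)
Throughout, $f$ denotes a finite order entire solution of \eqref{1.3}, $Q$ is nonconstant and $q\not\equiv 0$. I would prove the five assertions in the order (1), (3)--(4), (2), (5), since the later ones build on the earlier ones.

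\textbf{Order and type.} The growth of $f$ is compared with that of $e^{Q}$ using the Nevanlinna characteristic. Since $f$ has finite order, the difference analogue of the logarithmic derivative lemma gives $m(r,f(z+c)/f(z))=S(r,f)$. Writing
\[
q e^{Q}=\frac{P-f^n}{f(z+c)},
\]
we get
\[
\deg(Q)\,\frac{r^{\deg Q}}{\pi}\,(1+o(1))=T(r,e^{Q})\le nT(r,f)+T(r,f(z+c))+O(\log r)\le (n+1+o(1))T(r,f),
\]
so $\rho(f)\ge \deg Q$. Conversely, \eqref{1.3} gives $nT(r,f)=T(r,f^n)\le T(r,f(z+c))+T(r,e^Q)+O(\log r)$. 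Combined with $T(r,f(z+c))\le T(r+|c|,f)+O(1)$, this iterates to the bound $T(r,f)=O(r^{\deg Q})$, giving order $\deg Q$ and mean type. Here I would use $n\ge2$: a Borel-type lemma on $T(r+|c|,f)$ versus $T(r,f)$ holds outside an exceptional set. Alternatively, the standard estimate $T(r,f(z+c))=T(r,f)+S(r,f)$ yields $(n-1)T(r,f)\le T(r,e^Q)+S(r,f)$ directly.

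\textbf{Zeros and $\Gamma_0$.} If $P\equiv0$, then $f^n=-qe^Qf(z+c)$. Any zero $z_0$ of $f$ of multiplicity $m$ forces $f(z_0+c)$ to vanish to order at least $nm$ (up to the finitely many zeros of $q$). Iterating along $z_0+jc$ gives multiplicities growing like $n^j$. This contradicts finite order, since the counting function would then grow faster than any polynomial. Hence $f$ has only finitely many zeros, so $f=p e^{\alpha}$ with $p$ a polynomial. Comparing degrees in $p^n=-q p(z+c)e^{Q+\alpha(z+c)-n\alpha}$ forces $p$ to be constant. Normalizing, $f\in\Gamma_0$, with $\alpha$ determined by $n\alpha(z)-\alpha(z+c)=Q+\text{const}$. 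This linear difference equation in polynomial coefficients has a unique solution modulo the constants. Uniqueness up to $\eta$ with $\eta^{n-1}=1$ follows because the constant factor $A$ satisfies $A^{n-1}=$ fixed, which gives (4).

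Conversely, suppose $P\not\equiv0$ and $\lambda(f)<\rho(f)$. Then $f=\pi e^{h}$ with $\rho(\pi)<\deg Q$. Substituting gives
\[
\pi^n e^{nh}+q\pi(z+c)e^{Q+h(z+c)}=P .
\]
Borel's lemma on linear combinations of exponentials with small coefficients (Theorem~\ref{thA} could also be invoked) forces both exponential terms to vanish or cancel. This is impossible when $P\not\equiv0$, so $\lambda(f)=\rho(f)$, which is (2). For $n\ge3$ with $P\not\equiv0$, I would apply Theorem~\ref{thA} with $g=P-qe^Qf(z+c)$ to derive a contradiction. The cleaner route is to differentiate to eliminate $P$ and count zeros, showing that $P\not\equiv0$ is impossible for $n\ge3$; this completes (3).

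\textbf{Exponential polynomials (the main obstacle).} For (5), substitute $f=\sum P_je^{Q_j}$ and use Borel's theorem on the resulting exponential sum. The terms $e^{nQ_j}$ and the cross terms of $f^n$ must be matched against the $l$ terms $e^{Q+Q_j(z+c)}$ and the constant $P$. I would order the leading coefficients of the $Q_j$ on the convex hull in the direction of the highest degree. The extreme terms of $f^n$ (products of $n$ copies of a vertex exponent) have no partner unless $l\le2$ with one exponent constant. This yields $f=e^{\alpha}+d$. Plugging back into $(e^\alpha+d)^n+qe^{Q}(e^{\alpha(z+c)}+d)=P$, the middle binomial terms $e^{k\alpha}$ with $1\le k\le n-1$ must cancel against only two available terms. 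This forces $n=2$ and $\alpha(z+c)-\alpha(z)$ constant, hence $\deg\alpha=1$ and $\rho(f)=1$. The delicate bookkeeping in this convex-hull argument is where I expect the most work.
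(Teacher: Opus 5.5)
The paper does not prove this statement (it is quoted from Wen--Heittokangas--Laine), so your argument has to stand on its own. Parts (1), (2) and (4) are sound in outline. Two small points: in (1), $T(r,e^Q)\sim|a|r^{\deg Q}/\pi$ with $a$ the leading coefficient of $Q$, not $\deg Q\cdot r^{\deg Q}/\pi$; in (2), the Borel step needs a case split according to whether $Q+h(z+c)$ or $nh-Q-h(z+c)$ drops below degree $\deg Q$, but every case gives a contradiction.

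Two steps in (3), however, fail as written. First, ``comparing degrees'' in $p^n=-Kq\,p(z+c)$ only gives $(n-1)\deg p=\deg q$, which does not exclude nonconstant $p$ when $q$ is nonconstant. Your forward chain $z_0+jc$ may also stop at a zero of $q$, so it does not by itself show that $f$ has finitely many zeros. Run the chain backwards instead. The equation at $z_0-c$ gives $f(z_0)=0\Rightarrow f(z_0-c)=0$, with multiplicities $m_{j+1}=(m_j+\mathrm{ord}_{z_0-(j+1)c}\,q)/n$. For large $j$ these become $m_j/n$, which is impossible indefinitely for positive integers, so $f$ is zero-free (equivalently, the zero set of $p$ would be invariant under $w\mapsto w-c$).

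Second, the assertion that $P\not\equiv0$ is impossible for $n\ge3$ is not proved. Theorem~\ref{thA} does not apply: $qe^Qf(z+c)$ is neither a differential polynomial in $f$ nor has small coefficients, and $g=P-qe^Qf(z+c)$ is just $f^n$. Differentiating to eliminate $P$ only yields $N(r,1/f)=S(r,f)$, which contradicts nothing without a further step. In Case 1 of the proof of Theorem~\ref{th 1.4}, the paper adds a factorization $f=He^g$ with $H$ small and then the second main theorem. A direct fix is to apply the second main theorem for three small functions to $f^n$ with targets $0,P,\infty$. The zeros of $f^n-P=-qe^Qf(z+c)$ lie among those of $qf(z+c)$, and $T(r,f(z+c))=T(r,f)+S(r,f)$, so $nT(r,f)\le\bar N(r,1/f)+\bar N(r,1/f(z+c))+S(r,f)\le2T(r,f)+S(r,f)$, a contradiction for $n\ge3$.

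In (5), the convex-hull/Borel argument at degree $d=\deg Q$ only sees the leading coefficients $w_j$ of the $Q_j$. Terms sharing a leading coefficient merge into $A_we^{wz^d}$, where $A_w$ is an exponential polynomial of order $<d$, so extreme-point considerations cannot give ``$l\le2$ with one exponent constant''. Here is what they do give, with $a$ the leading coefficient of $Q$ and $W=\{w_j\}$. A nonzero vertex $v$ of $\mathrm{conv}\,W$ produces the otherwise unmatched term $A_v^ne^{nvz^d}$ unless $nv-a\in W$. If, for some direction $u$, both the maximiser and the minimiser of $\mathrm{Re}(w\bar u)$ on $\mathrm{conv}\,W$ were nonzero vertices, this would force $(n-1)\max\mathrm{Re}(w\bar u)\le\mathrm{Re}(a\bar u)\le(n-1)\min\mathrm{Re}(w\bar u)$. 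The outcome is $W=\{b\}$, or $W=\{0,b\}$ with $n=2$ and $a=b$.

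You still have to finish from there.
\begin{itemize}
\item If $W=\{b\}$, Borel gives $P\equiv0$ and (3) applies.
\item If $W=\{0,b\}$, Borel gives $A_0^2=P$, $2A_0A_be^{bz^d}=-qe^QA_0(z+c)$ and $A_b^2e^{2bz^d}=-qe^QA_b(z+c)e^{b(z+c)^d}$. Hence $A_0=h$ is a polynomial, and $A_be^{bz^d}=pe^Q$ with $p=-qh(z+c)/(2h)$ a polynomial. Then $e^{Q(z+c)-Q(z)}$ is rational, so $Q=\beta z+\gamma$. The backward-propagation argument above makes $p$ (and then $q$) constant, and $h(z+c)=2e^{\beta c}h(z)$ forces $h$ to be constant.
\end{itemize}
This lower-order analysis is the missing content of (5). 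Your final step (substituting $e^\alpha+d$) is fine once it is in place.
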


All the meromorphic solutions of equation \eqref{1.3} must be entire, which was proved by Liu \cite[Remark 1.(a)]{liuk} using an idea that originated from Naftalevich \cite{N 1976}. 
Liu \cite{liuk}  proceeds to classify the exponential polynomial solutions of 
\begin{equation}
\label{1.5}
f(z)^n + q(z)e^{Q(z)}f^{(k)}(z + c)=P(z), 
\end{equation}
where \(q(z)\) is entire, \(Q(z), P(z)\) are polynomials, \(k\) and \(n \geq 2\) are integers, and \(c\in\mathbb{C}\setminus\{0\}\). 

\begin{thmx}{\cite[Theorem 1.1]{liuk}}
\label{thC}
 Let \(q(z), P(z), Q(z)\) be polynomials such that \(Q(z)\) is not a constant, \(q(z)\not\equiv 0\), \(k\geq 1\) and \(n\geq 2\). Then, the finite order transcendental entire solution \(f\) of \eqref{1.5} should satisfy:

\begin{enumerate}
    \item Every solution \(f\) satisfies \(\rho(f)=\deg(Q(z))\) and is of normal type.
    \item Every solution \(f\) satisfies \(\lambda(f)=\rho(f)\) if and only if \(P(z)\not\equiv 0\).
    \item A solution \(f\) belongs to \(\Gamma_0'\) if and only if \(P(z)\equiv 0\). In particular, this is the case of \(n\geq 3\).
    \item If the solutions \(f, g\in\Gamma_0\), then \(f = \eta g\), where \(\eta^{n - 1}=1\).
    \item If \(f(z)\) is an exponential polynomial solution of the form \eqref{1.4}, then \(f\in\Gamma_1'\).
\end{enumerate}
Here, $\Gamma_1'= \{p(z)e^{\alpha(z)} + h(z)\}$,
$\Gamma_0' = \{p(z)e^{\alpha(z)}\},$ where $h$, $p$ and $\alpha$ are polynomial such that $\alpha$  is nonconstant.
\end{thmx}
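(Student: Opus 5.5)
The plan is to handle the five items of Theorem~\ref{thC} in order. The tools are Nevanlinna theory for $f$ entire of finite order, the difference logarithmic derivative lemma $m(r,f(z+c)/f(z))=S(r,f)$, Borel's lemma on linear combinations of exponentials, and the Clunie lemma. Throughout write $F(z)=f^{(k)}(z+c)$, so that \eqref{1.5} reads $f^n-P=-qe^{Q}F$. Note $F\not\equiv 0$ since $f$ is transcendental.

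\emph{Item (1).} Since $f$ is entire, $T(r,F)\le T(r,f)+S(r,f)$. From $nT(r,f)=T(r,P-qe^QF)$ we get $(n-1)T(r,f)\le T(r,e^Q)+S(r,f)$, so $\rho(f)\le\deg Q$. Conversely, $e^Q=(P-f^n)/(qF)$ gives $T(r,e^Q)\le (n+1)T(r,f)+S(r,f)$. Hence $T(r,f)\asymp r^{\deg Q}$ outside an exceptional set, and a standard lemma removes that set. This yields $\rho(f)=\deg Q$ and normal (mean) type.

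\emph{Items (2) and (3).} First suppose $P\equiv0$. If $z_0$ is a zero of $f$ of multiplicity $m$, then $F$ vanishes at $z_0$ to order $nm$, up to the finitely many zeros of $q$. Hence $f$ vanishes at $z_0+c$ to order at least $nm-k$ (for $k\ge1$ a zero of $f^{(k)}$ of order $\ge s$ need not come from a zero of $f$, so this step requires care). Iterating along $z_0+jc$, the multiplicities grow roughly like $n^j$. Since these points lie in a disc of radius $O(j)$, this contradicts the finite order of $f$. So $f$ has finitely many zeros and $f=p e^{\alpha}$, i.e.\ $f\in\Gamma_0'$.

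Conversely, if $P\not\equiv0$ but $\lambda(f)<\rho(f)$, write $f=he^{\alpha}$ with $\deg\alpha=\deg Q$ and $\rho(h)<\deg Q$. Substituting gives
\[
h^ne^{n\alpha}+q\,e^{Q+\alpha(z+c)}H=P ,
\]
where $H$ is of order $<\deg Q$. Borel's lemma forces the two exponential terms to cancel each other, which gives $P\equiv0$, a contradiction. This proves (2) and the equivalence in (3).

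For the case $n\ge3$ we must show $P\equiv0$. We eliminate $e^Q$ by differentiating $(f^n-P)/(qF)=-e^Q$, which yields
\[
f^{n-1}\varphi=P'qF-P\big((qF)'+Q'qF\big),\qquad \varphi=nf'qF-f\big((qF)'+Q'qF\big).
\]
The right-hand side is a differential-difference polynomial of degree $1$ in $f$. The Clunie lemma, in its difference-differential version, then gives $m(r,\varphi)=S(r,f)$, and since $n-1\ge2$ also $m(r,f\varphi)=S(r,f)$. From this we deduce $\varphi\equiv0$, hence the right side vanishes. Then $P\not\equiv0$ would force $(qF)'/(qF)=P'/P-Q'$, so $qF=Ce^{-Q}P$. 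That is of order $\deg Q$ and contradicts \eqref{1.5} via Borel's lemma, so $P\equiv0$.

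\emph{Item (4).} Let $f=p_1e^{\alpha_1}$ and $g=p_2e^{\alpha_2}$ both solve \eqref{1.5} with $P\equiv0$. Comparing exponents gives $(n-1)\alpha_j(z)-\alpha_j(z+c)=Q+\text{const}$. The difference $\beta=\alpha_1-\alpha_2$ then satisfies $(n-1)\beta(z)-\beta(z+c)=\text{const}$, and comparing leading coefficients (using $n\ge2$) forces $\beta$ constant. Comparing the polynomial factors, both $p_j$ satisfy the same equation $p^n=\kappa\,\widetilde{p}$ with $\widetilde p$ linear in $p$. A degree and leading-coefficient count then shows $p_1e^{\beta}=\eta p_2$ with $\eta^{n-1}=1$.

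\emph{Item (5).} I expect this to be the main obstacle. Let $f$ be an exponential polynomial \eqref{1.4}, normalized so that the $Q_j$ have distinct nonconstant parts plus a polynomial term. Consider the leading coefficients $w_j$ of the $Q_j$ in degree $\deg Q$ (lower-degree $Q_j$ are handled similarly), and their convex hull. In $f^n$, the extreme frequencies $nw_j$ appear with nonvanishing coefficients. These must be matched either by $P$ (frequency $0$) or by $w_Q+w_j$ from $qe^QF$. Since $n\ge2$, the map $w\mapsto nw$ expands the hull while $w\mapsto w+w_Q$ only translates it. A vertex comparison then shows there is only one nonzero frequency. This first pass only handles the leading-degree frequencies; the remaining terms of the $Q_j$ then have to be shown to collapse to a single exponential by repeating the comparison inductively on the lower-degree parts. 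Applying Borel's lemma to the remaining identity should then give $f=p(z)e^{\alpha(z)}+h(z)$, i.e.\ $f\in\Gamma_1'$.
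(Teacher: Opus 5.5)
The paper does not prove this statement. It quotes it from Liu and uses it as a black box: parts (1) and (3) are exactly what it invokes in the case $P\equiv0$ of Theorem~\ref{th 1.4}. So there is no in-paper proof to compare with, and I judge your proposal on its own.

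\textbf{What holds up.} Your item (1) is sound. So is the Borel argument that $\lambda(f)<\rho(f)$ forces $P\equiv0$, and so is the Clunie argument for $n\ge3$.

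\textbf{The main gap.} The implication $P\equiv0\Rightarrow f\in\Gamma_0'$ is not proved, and with it the direction $P\equiv0\Rightarrow\lambda(f)<\rho(f)$ in (2) also falls. Zero propagation along $z_0+jc$ is the $k=0$ argument of Wen--Heittokangas--Laine, and it fails for every $k\ge1$, which is the whole range of this theorem. A zero of $f$ of multiplicity $m$ at $z_0$ only yields a zero of order $nm$ of $f^{(k)}$ at $z_0+c$. Near $z_0+c$, $f$ is then a polynomial of degree $<k$ in $z-z_0-c$ plus $O((z-z_0-c)^{nm+k})$, so $f$ itself need not vanish there. Your sentence ``hence $f$ vanishes at $z_0+c$ to order at least $nm-k$'' is therefore false, as your own parenthetical concedes, and the chain never starts.

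\textbf{A repair using your own tools.} Put $\psi=fe^{-Q/(n-1)}$, which is entire. A direct computation gives
\[
\psi^n=-q\,e^{\Delta}L[\psi](z+c),\qquad \Delta=\frac{Q(z+c)-Q(z)}{n-1},
\]
where $\deg\Delta=\deg Q-1$ and $L$ is a linear differential operator with polynomial coefficients. The estimate from your item (1) then gives $(n-1)T(r,\psi)\le T(r,e^{\Delta})+S(r,\psi)$, so $\rho(\psi)\le \deg Q-1$. The new equation has the same shape, so you can repeat the substitution. After $\deg Q$ steps the exponent is constant, the estimate forces a polynomial, and $f=pe^{\alpha}$.

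\textbf{Smaller problems in (4).} The exponent relation is $n\alpha_j(z)-\alpha_j(z+c)=Q+\mathrm{const}$, not $(n-1)\alpha_j(z)-\alpha_j(z+c)$. With your version the leading coefficient of $\beta$ gets multiplied by $n-2$, so the argument collapses for $n=2$. Also, a degree and leading-coefficient count cannot show that two polynomials $p_1e^{\beta}$ and $p_2$ are proportional, so your $\Gamma_0'$ version is unproved. For the statement as written ($f,g\in\Gamma_0$) it is enough, once $\beta$ is constant, to substitute $f=e^{\beta}g$ and read off $e^{(n-1)\beta}=1$.

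\textbf{Smaller problems in (5).} Matching only the vertices of $n\cdot\mathrm{conv}(W)$ against $(w_Q+W)\cup\{0\}$ does not force a single nonzero frequency. For $n=2$, the set $W=\{0,w_Q/2,3w_Q/4\}$ passes that test: the vertices $0$ and $3w_Q/2$ are matched by $P$ and by $w_Q+w_Q/2$. Yet the frequency $7w_Q/4$ coming from $qe^{Q}F$ is left unmatched. You need two further ingredients to reach $W=\{w_Q/(n-1)\}$, or $W=\{0,w_Q\}$ when $n=2$:
\begin{itemize}
\item every vertex of the hull of \emph{all} frequencies, including those in $w_Q+W$, must be hit at least twice;
\item a separate look at the smallest nonzero frequency.
\end{itemize}
Finally, your ``inductive collapse of the lower-degree parts'' in the subcase $P\equiv0$ is again statement (3), so it inherits the main gap above.
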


The role of exponential polynomials in the theories of
linear/non-linear differential equations, oscillation theory and differential-difference equations  was 
discussed by Heittokangas-Ishizaki-Tohge-Wen \cite{H2023}. Thirteen open problems are given in \cite{H2023}, one of which is related to \eqref{1.5} can be stated as follows.

\vspace{10pt} 

\textbf{ Problem 12 in \cite{H2023}. } Suppose that \(f\) is an entire solution of \eqref{1.5}, where \(q(z)\) is a polynomial. If \(f\in\Gamma_1'\setminus\Gamma_0'\), then is it true that \(\rho(f)=1\)?

\vspace{10pt} 

First, we provide all the finite order entire solutions of \eqref{1.5}. This complements Theorem \ref{thB} and Theorem \ref{thC}.

\begin{thmx}
\label{th 1.4}
 Let \(q(z), P(z), Q(z)\) be polynomials such that \(Q(z)\) is not a constant, \(q(z)\not\equiv 0\), \(k\geq 0\) and \(n\geq 2\). If the  $f$ is a finite order transcendental entire solution   of \eqref{1.5}, then  one of the following conclusions holds. 
\begin{enumerate}
    \item $P\equiv0$,  $f=He^{\frac{Q+Q_1}{n-1}}$, where  $Q_1$ and $H$ are polynomials such that $\deg (Q_1)=\deg (Q) -1$.
    \item $P\not\equiv0$, $n=2$,  
$f= CP^{\frac{1}{2}} - P^{\frac{1}{2}} \frac{1}{2} \int F_2e^{Q} P^{-\frac{1}{2}} \, dz $, where \(C\) is an arbitrary constant, $F_2$ is a small function of $f$. In particular,  If \(f(z)\) is an exponential polynomial solution of the form \eqref{1.4}, then $k=0$, $\deg (Q)=1$,  $q$ and $P$ are constants  and 
\begin{align*}
    f=\frac{-q}{2}e^Q+h, 
\end{align*}
where $h^2=P$ is a constant. 
\end{enumerate}
\end{thmx}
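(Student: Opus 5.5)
The plan is to treat the case $P\equiv 0$ first and then the case $P\not\equiv 0$, using Theorem \ref{thC} (and for $k=0$ Theorem \ref{thB}) for the growth facts $\rho(f)=\deg(Q)$ and "$P\equiv0$ iff $f\in\Gamma_0'$, forced if $n\ge3$".

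\textbf{Case $P\equiv 0$.} Part (3) of Theorem \ref{thC} gives $f=pe^{\alpha}$ with $p,\alpha$ polynomials. Substituting into \eqref{1.5} gives
\[
p^n e^{n\alpha}=-q e^{Q}\,\bigl(pe^{\alpha}\bigr)^{(k)}(z+c)=-q\,\widetilde p(z)\,e^{Q(z)+\alpha(z+c)},
\]
where $\widetilde p$ is a polynomial. Comparing exponents yields $(n-1)\alpha(z)+\bigl(\alpha(z)-\alpha(z+c)\bigr)=Q(z)+\mathrm{const}$. Since $\alpha(z)-\alpha(z+c)$ has degree $\deg\alpha-1$, we get $\deg \alpha=\deg Q$ and $\alpha=\frac{Q+Q_1}{n-1}$ with $\deg Q_1=\deg Q-1$. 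The constant is absorbed into $H$, which gives conclusion (1).

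\textbf{Case $P\not\equiv0$.} Here $n=2$ is forced. Write $g=qe^{Q}f^{(k)}(z+c)$, so $f^2=P-g$. Differentiating gives $2ff'=P'-g'$. Eliminating $e^{Q}$ by writing $g'=(\frac{q'}{q}+Q'+\frac{f^{(k+1)}(z+c)}{f^{(k)}(z+c)})g=:F_1g$, I get
\[
2ff'-F_1f^2=P'-F_1P .
\]
This is a Riccati/Bernoulli-type relation. To reach the stated formula, I will instead view $f^2=P-F_2e^{Q}$ with $F_2=qf^{(k)}(z+c)e^{0}$, suitably normalized so that $F_2$ is a small function of $f$. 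Dividing by $P$ and writing $f=P^{1/2}u$ linearizes the derivative equation to $u'=-\tfrac12 F_2e^{Q}P^{-1/2}\cdot(\ldots)$. Integrating gives $f=CP^{1/2}-\tfrac12P^{1/2}\int F_2e^{Q}P^{-1/2}dz$. Precisely identifying $F_2$ (a logarithmic-derivative-type expression, small by the difference lemma of Chiang--Feng/Halburd--Korhonen) is bookkeeping.

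\textbf{Exponential polynomial subcase.} By Theorem \ref{thC}(5), $f=pe^{\alpha}+h$ with $p\not\equiv0$. Substituting with $n=2$ gives
\[
p^2e^{2\alpha}+2phe^{\alpha}+h^2+qe^{Q}\bigl[(pe^{\alpha})^{(k)}(z+c)+h^{(k)}(z+c)\bigr]=P.
\]
I then apply Borel's lemma on linear independence of exponentials. The term $e^{2\alpha}$ must be cancelled by $qe^{Q+\alpha(z+c)}$, so $2\alpha(z)-\alpha(z+c)-Q$ is constant. The term $e^{\alpha}$ must cancel $qh^{(k)}(z+c)e^{Q}$, which forces $h^{(k)}\not\equiv0$ and $\alpha-Q$ constant. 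Hence $\alpha(z)-\alpha(z+c)$ is constant, so $\deg\alpha=1$, and $h^2=P$.

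Then $h^2=P$ with $h^{(k)}\not\equiv 0$, together with the polynomial identities $p^2=-q\,\widetilde p$ and $2ph=-qh^{(k)}(z+c)e^{\mathrm{const}}$, pins things down by degree counting. From $2ph=-q e^{a}h^{(k)}(z+c)$ we get $\deg p+\deg h=\deg q+\deg h-k$. From $p^2e^{2\alpha}=-qe^{Q}(pe^\alpha)^{(k)}(z+c)$, where $(pe^\alpha)^{(k)}=\widetilde p e^\alpha$ with $\deg\widetilde p=\deg p$ since $\alpha$ is linear, we get $2\deg p=\deg q+\deg p$. Together these give $k=0$. With $k=0$, the relation $2ph=-qe^{a}h(z+c)$ forces $p$ and $q$ to be related, and combining with $p^2=-qp(z+c)e^{b}$ leads to $h(z+c)/h(z)$ being a constant. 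Hence $h$ is constant, and $P$, $p$, $q$ are constant too. Normalizing $e^{Q}$ then gives $f=-\frac q2e^Q+h$.

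\textbf{Main obstacle.} The main obstacle is the general $P\not\equiv0$ representation: making the ``small function'' $F_2$ rigorous and showing no other structure appears. I would first try to get it directly from the linearization via $f=P^{1/2}u$, with smallness supplied by the logarithmic derivative lemmas.
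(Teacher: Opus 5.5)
Your treatment of $P\equiv0$ is fine: you invoke Theorem~\ref{thC}(3), or Theorem~\ref{thB}(3) when $k=0$, and compare exponents. Your appeal to the same results to force $n=2$ is also fine. The genuine gap is the first half of conclusion (2).

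Any $f$ satisfies the displayed formula if one simply \emph{defines} $F_2:=(\frac{P'}{P}f-2f')e^{-Q}$. So the whole content is that this $F_2$ is a small function of $f$. That is not bookkeeping, and your route never produces it:

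Your relation $2ff'-F_1f^2=P'-F_1P$ is only the derivative of \eqref{1.5} rewritten. Its coefficient $\frac{q'}{q}+Q'+\frac{f^{(k+1)}(z+c)}{f^{(k)}(z+c)}$ has poles at all zeros of $f^{(k)}(z+c)$, so it is not small.

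Your fallback $F_2=qf^{(k)}(z+c)$ grows like $f$ itself.

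The substitution $f=P^{1/2}u$ turns $f^2=P-F_2e^Q$ into $u^2=1-F_2e^QP^{-1}$, which is not linear in $u$. The ``$(\ldots)$'' hides exactly the missing step.

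The idea you need is to eliminate $P$ rather than $e^Q$. Subtracting the derivative of \eqref{1.5} from $\frac{P'}{P}$ times \eqref{1.5} gives $\frac{P'}{P}f^2-2ff'=F_1e^Q$, where $F_1=(q'+qQ'-\frac{P'}{P}q)f^{(k)}(z+c)+qf^{(k+1)}(z+c)$. This $F_1$ is \emph{linear} in shifted derivatives of $f$, and $F_1\not\equiv0$, since otherwise $f^2=c_1P$. Off the zeros of $P$, the left side vanishes at every zero of $f$ to at least its multiplicity. Hence $F_2:=F_1/f$ satisfies $N(r,F_2)=O(\log r)$. Also $m(r,F_2)=S(r,f)$ by the logarithmic and difference logarithmic derivative lemmas, because $F_2$ is a rational-coefficient combination of $f^{(k)}(z+c)/f$ and $f^{(k+1)}(z+c)/f$. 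Dividing by $f$ gives the linear equation $2f'-\frac{P'}{P}f=-F_2e^Q$, and the integrating factor $P^{-1/2}$ yields the formula.

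In the exponential-polynomial subcase, applying Borel's lemma directly to the exponential-polynomial identity is a legitimate and more elementary alternative to the paper's route. The paper first proves $\frac{P'}{P}h-2h'\equiv0$ from \eqref{2.7} via the second main theorem, rewrites $f=p_1e^Q+h$ with $p_1$ small, and then uses Borel with small coefficients. For your version to work, you must record that $h\not\equiv0$ (otherwise $f\in\Gamma_0'$ and $P\equiv0$). You must also rule out $2\alpha-Q$ being constant, which would leave the $e^{Q+\alpha(z+c)}$ term isolated. With that, your conclusions $\deg\alpha=1$ and $h^2=P$, and the degree count giving $k=0$, are correct.

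The final step does not follow as written. Combining $2ph=-qe^{a}h(z+c)$ with $p^2=-qe^{b}p(z+c)$ only gives $\frac{h}{p}(z+c)=\mathrm{const}\cdot\frac{h}{p}(z)$, i.e.\ $h=\lambda p$. It does not show that $h(z+c)/h(z)$ is constant. You need the extra argument the paper imports from Lemma 2.4 of Wen--Heittokangas--Laine:
\begin{itemize}
\item $p(z+c)$ divides $p(z)^2$, so the zero set $Z$ of $p$ satisfies $Z-c\subseteq Z$.
\item This is impossible for a finite nonempty set, so $p$ is constant, and then $q$ is constant.
\item Now $h(z+c)$ is a constant multiple of $h(z)$, which forces $h$ to be constant.
\end{itemize}
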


The following   examples   illustrate conclusion (1) and conclusion (2) of Theorem \ref{th 1.4}.

\begin{exa} $f=e^{z^2}$ 
solves the equation
\[
f(z)^2 - e^{z^2-2z-1}f(z+1) = 0.
\]
Here $n=2$, $P=0$, $Q=z^2-2z-1$, $Q_1=2z+1$ and $H=1$. 
\end{exa}

\begin{exa}
\cite[Remark 5]{liuk}
$f(z) = (z+1)e^z$ solves the equation
\[
f(z)^n - e(z+1)^{n-1}e^{(n-1)z}f'(z-1) = 0.
\]
Here $n\ge 2$, $P=0$, $Q=(n-1)z$, $Q_1=0$ and $H=z+1$. 
\end{exa}
\begin{exa}\cite[Equation 1.5]{Wen}
\( f_1(z) = e^z + 1 \) and \( f_2(z) = e^z - 1 \) both solve
\[
f(z)^2 - 2e^z f(z - \log 2) = 1.
\]
\end{exa}

Next,  we derive the following corollary by invoking Theorem \ref{th 1.4}, which can be used to solve Problem 12 in \cite{H2023}.

\begin{cor}
Under the assumptions of Theorem \ref{th 1.4}, if \(f\in\Gamma_1'\setminus\Gamma_0'\), then \(\deg Q = 1\) and  \(f = \frac{-q}{2} e^Q + h\), where \(H\) is a polynomial and \(h\) is a constant. 
\end{cor}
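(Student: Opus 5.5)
The corollary follows almost directly from Theorem \ref{th 1.4}. Let $f\in\Gamma_1'\setminus\Gamma_0'$ be a finite order transcendental entire solution of \eqref{1.5}. Write $f=p e^{\alpha}+h$ with $p,h,\alpha$ polynomials, $\alpha$ nonconstant and, since $f\notin\Gamma_0'$, $h\not\equiv 0$. In particular $f$ is an exponential polynomial of the form \eqref{1.4}. The plan is to show that conclusion (1) of Theorem \ref{th 1.4} cannot occur, so that conclusion (2), including its ``in particular'' part for exponential polynomial solutions, yields the statement.

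\textbf{Step 1: exclude conclusion (1).} Suppose $P\equiv 0$. Then Theorem \ref{th 1.4}(1) gives $f=He^{\frac{Q+Q_1}{n-1}}$ with $H$ a polynomial. This would give
\[
pe^{\alpha}+h=He^{\beta},\qquad \beta=\frac{Q+Q_1}{n-1},
\]
where $\beta$ is nonconstant because $\deg Q_1=\deg Q-1<\deg Q$. We compare the growth of the two sides.
\begin{itemize}
\item If $\alpha-\beta$ is nonconstant, the identity $pe^{\alpha-\beta}-H+he^{-\beta}=0$ contradicts Borel's theorem, since $h\not\equiv 0$.
\item If $\alpha-\beta$ is constant, the identity reduces to $(pe^{\alpha-\beta}-H)e^{\beta}=-h$. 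Its left side is either identically zero or transcendental, while the right side is a nonzero polynomial, which is impossible.
\end{itemize}
Equivalently, the zeros of $f$ are the zeros of the polynomial $H$, so there are finitely many of them; but $pe^{\alpha}+h$ with $h\not\equiv0$ has infinitely many zeros. Hence $P\not\equiv 0$.

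\textbf{Step 2: apply conclusion (2).} With $P\not\equiv 0$, Theorem \ref{th 1.4}(2) applies; alternatively, Theorem \ref{thC}(3) already forces $n=2$ here. Since $f$ is an exponential polynomial solution, the ``in particular'' part of (2) gives $k=0$, $\deg Q=1$, $q$ and $P$ constants, and
\[
f=\frac{-q}{2}e^{Q}+h,
\]
with $h$ a constant satisfying $h^2=P$. This is exactly the claim. The polynomial $H$ mentioned in the statement plays no role here; it is a leftover from conclusion (1). Finally, since $\deg Q=1$, we get $\rho(f)=1$, which answers Problem 12 affirmatively.

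\textbf{Main obstacle.} The only real content is Step 1, showing that a function in $\Gamma_1'\setminus\Gamma_0'$ cannot also have the form $He^{\beta}$. Counting zeros handles it: $He^{\beta}$ has finitely many zeros, whereas $pe^{\alpha}+h$ with $h\not\equiv0$ has infinitely many. Everything else is inherited from Theorem \ref{th 1.4}, whose exponential polynomial clause carries the substantive work.
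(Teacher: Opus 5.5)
Your proposal is correct and follows the paper's route: the corollary is read off from the exponential-polynomial clause of Theorem \ref{th 1.4}(2), and you rightly observe that the stray $H$ in the statement plays no role. The only difference is that you exclude conclusion (1) with an explicit Borel/zero-counting argument, whereas the paper implicitly relies on Theorem \ref{thC}(3) ($f\notin\Gamma_0'$ forces $P\not\equiv 0$), which you also mention as an alternative.
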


\section{Proof of Theorem \ref{th 1.4}}

\begin{proof}
Let $f^{(k)}(z+c)=f^{(k)}_c$. It is easy to see $f$ is transcendental. Otherwise,  we can get $qf^{(k)}_c\equiv 0$, which is impossible.   If $P\equiv0$, then \eqref{1.5} reduces into 
 \begin{align}
    \label{2.1}
    f^n+qe^Qf^{(k)}_c=0.
 \end{align}
From \eqref{2.1}, the zero of $f$ must be the  zero of $qf^{(k)}_c$. Thus $F=\frac{-qf^{(k)}_c}{f}$ is an entire function. By the difference logarithmic derivative lemma \cite[Theorem 2.1]{halburd2006}, we get $T(r,F)=m(r,F)=S(r,f)$. Rewriting \eqref{2.1} into $f^{n-1}=Fe^Q$. By applying Theorem \ref{thC}-(1) and (3), we can obtain $f = p(z)e^{\alpha(z)}$  and $\deg (Q)=\deg (\alpha)$, where $p$ and $\alpha$ are polynomial such that $\alpha$  is nonconstant. Thus  $F=\frac{-qf^{(k)}_c}{f}=p_1e^{\alpha(z+c)-\alpha}$, where $p_1$ is a polynomial. We deduce $f^{n-1}=Fe^Q=p_1e^{Q+Q_1}$, where $Q_1=\alpha(z+c)-\alpha$ and $\deg (Q_1)=\deg (Q) -1$. Now we can get $f=He^{\frac{Q+Q_1}{n-1}}$, where $H$ is a polynomial such that $H^{n-1}=p_1$. 

 Next we consider the case $P\not\equiv0$. Differentiating equation \eqref{1.5} yields
\begin{align}
\label{2.2}
nf^{n-1}f'+q'e^Qf^{(k)}_c+qe^QQ'f^{(k)}_c+qe^Qf^{(k+1)}_c=P'.
\end{align}
Multiply equation \eqref{1.5} by $\frac{P'}{P}$  and subtract equation \eqref{2.2} yields 
\begin{align}
\label{2.3}
    \frac{P'}{P}f^n-nf^{n-1}f'=F_1e^Q, 
\end{align}
where $F_1=q'f^{(k)}_c+qQ'f^{(k)}_c+qf^{(k+1)}_c-\frac{P'}{P}qf^{(k)}_c$.  If $F_1\equiv0$, then \eqref{2.3} yields  $\frac{P'}{P}f^n-nf^{n-1}f'=0$. 
Through integration, we can obtain $f^n=c_1P$, where $c_1$ is a nonzero constant. This contradicts the fact that $f$ is a transcendental function. Therefore $F_1\not\equiv0$.

From equation \eqref{2.3}, we can deduce that if a zero of $f$ is not a pole of $\frac{P'}{P}$, then it must be a zero of $F_1$. Let $F_2=\frac{F_1}{f}$, 
since $P$ is a polynomial,  we can get $N(r,\frac{F_1}{f})=S(r,f)$.  By the difference logarithmic derivative lemma \cite[Theorem 2.1]{halburd2006}, we get $m(r,F_2)=S(r,f)$. Thus $T(r,F_2)=S(r,f)$, we rewrite \eqref{2.3} into 
\begin{align}
\label{2.4}
\frac{P'}{P}f^{n-1}-nf^{n-2}f'=F_2e^Q.
\end{align}
Next we consider two cases $n\ge3$ and $n=2$. 

{\textbf{Case 1.}} We  consider  the case $n\ge3$ at first. Since $n\ge 3$, we  can deduce that if a zero of $f$ is not a pole of $\frac{P'}{P}$, then it must be a zero of $F_2$.   Thus $N(r,\frac{1}{f})\le N(r,\frac{1}{F_2})+S(r,f)=S(r,f)$. By applying Hadamard's factorization theorem for finite order entire functions, we can obtain $f=He^g$, where $H$ is a small function of $f$ and $g$ is a polynomial.  Let $H_1=\frac{f^{(k)}_c}{f}$, it is easy to see $N(r,H_1)=S(r,f)$.  By the difference logarithmic derivative lemma \cite[Theorem 2.1]{halburd2006}, we get $m(r,H_1)=S(r,f)$. Thus $H_1$ is a small function of $f$. We may rewrite $f^{(k)}_c=\frac{f^{(k)}_c}{f}f=H_1He^g$. Now substituting the expressions of $f$ and   $f^{(k)}_c$ into \eqref{1.5}, we can get  
\begin{align}
\label{2.5}
H^ne^{ng}+qe^QH_1He^g=P. 
\end{align}
By \eqref{2.5}, we can  apply  the second fundamental theorem concerning three small functions \cite[Theorem 2.5]{hayman} to $H^ne^{ng}$, then 
\begin{align*}
T(r,H^ne^{ng})&\le N(r,\frac{1}{H^ne^{ng}}) +N(r,H^ne^{ng})+N(r,\frac{1}{H^ne^{ng}-P})+S(r,e^{ng})\\
&\le S(r,f)+S(r,e^{ng}),
\end{align*}
which  contradicts the
fact that $T(r,f)=T(r,e^g)+S(r,f)$.  Thus $n\ge 3$ is impossible. 

{\textbf{Case 2.}} Next, we  consider  the case $n=2$. From \eqref{2.4},  we deduce 
\begin{align}
\label{2.6}
\frac{P'}{P}f-2f'=F_2e^Q. 
\end{align}
Thus, the general solution is
\[
f= CP^{\frac{1}{2}} - P^{\frac{1}{2}} \frac{1}{2} \int F_2e^{Q} P^{-\frac{1}{2}} \, dz 
\]
where \(C\) is an arbitrary constant.

Let   \(f(z)\) be  an exponential polynomial solution of the form \eqref{1.4}. By applying Theorem \ref{thC}-(2) and (5), we deduce that $f(z)=p(z)e^{\alpha(z)} + h(z)$, where $p(z), h(z)$ are polynomials such that $h\not\equiv0$ and $\alpha(z) $ is a nonconstant polynomial. Substituting $f(z)=p(z)e^{\alpha(z)} + h(z)$ into  \eqref{2.6}, we get 
\begin{align}
\label{2.7}
(\frac{P'}{P}p-2p'-2p\alpha')e^\alpha+\frac{P'}{P}h-2h'=F_2e^Q. 
\end{align}
We claim $A=\frac{P'}{P}p-2p'-2p\alpha'\not\equiv0$. Otherwise, by integration, we can obtain that $e^{2\alpha}=c_1\frac{P}{p^2}$, where $c_1$ is a constant. This contradicts the
fact that $P$  and $p$ are polynomials.  Therefore $T(r,f)=T(r,e^\alpha)+O(\log r)=T(r, Ae^\alpha)+O(\log r)$. Since $F_2$ is a small function of $f$, then $F_2$ is also a small function of $Ae^\alpha$.
 
We claim $\frac{P'}{P}h-2h' \equiv0$. Otherwise, $\frac{P'}{P}h-2h'\not\equiv0$, by \eqref{2.7}, we can  apply  the second fundamental theorem concerning three small functions \cite[Theorem 2.5]{hayman} to $Ae^\alpha$, then 
\begin{align*}
T(r,Ae^\alpha)&\le N(r,\frac{1}{Ae^\alpha}) 
+N(r,Ae^\alpha) \nonumber
+N(r,\frac{1}{Ae^\alpha+\frac{P'}{P}h-2h'})+S(r,f)\nonumber \\
&\le N(r,\frac{1}{F_2}) +S(r,f ) \nonumber \\
&\le S(r,f ). 
\end{align*}
This contradicts the fact that   $T(r,f) =T(r, Ae^\alpha)+O(\log r)$. Thus  $\frac{P'}{P}h-2h'\equiv0$, by integration, we can obtain $h=c_2P^{\frac{1}{2}}$, where $c_2$ is a nonzero constant. From \eqref{2.7}, we get $e^\alpha=\frac{F_2}{\frac{P'}{P}p-2p'-2p\alpha'}e^Q$. Thus $T(r,e^Q)=T(r,e^\alpha)+S(r,f)$.
 Now we rewrite $f(z)=p(z)e^{\alpha(z)} + h(z)$ into $f(z)=p_1(z)e^Q+h(z)$, where $p_1=p\frac{F_2}{\frac{P'}{P}p-2p'-2p\alpha'}$ is a small entire function of $e^Q$ and $h=c_2P^{\frac{1}{2}}$. Thus $f^{(k)}_c=v_{k}e^{Q(z+c)-Q(z)}e^{Q(z)}+m_{k}$, where $v_k$ is  a  differential difference polynomial  in $p_1$ and $Q$, and  $m_{k}=h^{(k)}(z+c)$.

 Substituting the expressions of $f$ and  $f^{(k)}_c$ into \eqref{1.5}, we get 
 \begin{align}
 \label{2.9}
(p_1^2+qv_ke^{Q(z+c)-Q(z)})e^{2Q}+(2p_1h+qm_{k})e^Q+h^2=P. 
 \end{align}
Using Borel's Lemma \cite[p. 77]{yang2003} to \eqref{2.9} yields $h^2=P$, $p_1^2+qv_ke^{Q(z+c)-Q(z)}\equiv0$ and  $2p_1h+qm_{k}\equiv0$.  
 $2p_1h+qm_k\equiv0$ means $p_1=\frac{-qm_k}{2h}$ is a polynomial. Thus $v_k$ is  also a polynomial. 
 $p_1^2+qv_ke^{Q(z+c)-Q(z)}\equiv0$  yields $\deg (Q)=1$. Let $Q(z)=\beta z+\gamma$, where $\beta$ is a nonzero constant and $\gamma$ is a constant. Then $v_k=
  \sum_{j=0}^{k} \binom{k}{j} p_1^{(j)}(z+c) \beta^{k-j}$, $\deg v_k =\deg p_1$.   By  $p_1^2+qv_ke^{Q(z+c)-Q(z)}\equiv0$, we obtain  $2\deg (p_1)=\deg (q)+\deg p_1$. Thus $\deg (p_1)=\deg (q)$. However $2p_1h+qm_{k}\equiv0$ implies $\deg (h)+\deg (p_1)=
  \deg (q)+\deg (h)-k$.  Therefore $k=0$, $v_k=p_1(z+c)$. By applying \cite[Lemma 2.4]{Wen} to the equation \( p_1^2 + q p_1(z + c) e^{Q(z + c) - Q(z)} \equiv 0 \), we can conclude that \( p_1 \) and $q$ are constants.  From $2p_1h+qh(z+c)\equiv0$, we can deduce that  $h$ is a constant. 
Now  we draw a conclusion  that $n=2$, $k=0$,   and 
\begin{align*}
    f(z)=\frac{-q}{2}e^Q+h, 
\end{align*}
where $h^2=P$ is a constant  and $\deg (Q)=1$. 
\end{proof}

	\section*{Declarations}
	\begin{itemize}
		
		\item\noindent{\bf Funding}
  Jianren Long was supported by the National Natural Science Foundation of China (Grant No. 12261023, 11861023) and Xuxu Xiang was supported by Graduate Research Fund Project of Guizhou Province (2025YJSKYJJ107)
		\item \noindent{\bf Conflicts of Interest}
		The authors declare that there are no conflicts of interest regarding the publication of this paper.

        \item\noindent{\bf Author Contributions}
All authors contributed to the study conception and design. All authors read and approved the final manuscript.

%\item\noindent{\bf Acknowledgements} The authors would like to thank Prof. Zhitao Wen of the Shantou University  for giving a lot of valuable suggestions during the preparation of this paper.
	\end{itemize}

\end{document}